\theoremstyle{plain}
\newtheorem{thm}{Theorem}[section]
\newtheorem{lem}[thm]{Lemma}
\newtheorem{cor}[thm]{Corollary}
\theoremstyle{definition}
\theoremstyle{remark}
     \newcommand{\FG}{{\mathbf{G}}}
    \newcommand{\FP}{{\mathbf{P}}}
    \newcommand{\FM}{{\mathbf{M}}}
    \newcommand{\FN}{{\mathbf{N}}}
    \newcommand{\FX}{{\mathbf{X}}}
    \newcommand{\sT}{{\mathscr{T}}}
    \newcommand{\BC}{{\mathbb {C}}}
     \newcommand{\BN}{{\mathbb {N}}}
    \newcommand{\BQ}{{\mathbb {Q}}}
     \newcommand{\fn}{{\mathfrak{n}}}
    \newcommand{\sL}{{\mathscr{L}}}
    \newcommand{\ad}{{\mathrm{ad}}}
    \renewcommand{\d}{{\mathrm{d}}}
    \newcommand{\el}{{\mathrm{ell}}}
    \newcommand{\Gal}{{\mathrm{Gal}}} 
    \newcommand{\Hom}{{\mathrm{Hom}}}
    \newcommand{\Lie}{{\mathrm{Lie}}}
    \newcommand{\ob}{{\mathrm{ob}}}
     \newcommand{\R}{{\mathrm{R}}}
    \newcommand{\reg}{{\mathrm{reg}}}
    \newcommand{\pair}[1]{\langle {#1} \rangle}
    \newcommand{\incl}{\hookrightarrow}
    \newcommand{\sk}{\medskip}
    \newcommand{\lra}{\longrightarrow}
    \newcommand{\ra}{\rightarrow} 
    \newcommand{\bs}{\backslash}
    \newcommand{\s}{\sk\noindent}
    \newcommand{\abs}[1]{\lvert#1\rvert}
\title{An orthogonality relation for spherical characters of
supercuspidal representations}
\author{Chong Zhang}
\begin{document}
\date{}
\maketitle

\begin{abstract}
In this note, we show that, in the setting of Galois pairs, the
spherical characters of unitary supercuspidal representations
satisfy an orthogonality relation.
\end{abstract}

\section{Main result}
Let $F$ be a finite extension field of $\BQ_p$ for an odd prime $p$,
and $E$ a quadratic field extension of $F$. Let $G$ be a connected
reductive group over $F$, and $\FG=\R_{E/F}G$ the Weil restriction
of $G$ with respect to $E/F$. The nontrivial automorphism in
$\Gal(E/F)$ induces an involution $\theta$, defined over $F$, on
$\FG$. The pair $(\FG,G)$ is called a Galois pair, which is a kind
of symmetric pairs.

Let $\pi$ be an irreducible admissible unitary representation of
$G(E)=\FG(F)$. We say that $\pi$ is $G$-distinguished if the space
$\Hom_{G(F)}(\pi,\BC)$ is nonzero. Given an element $\ell$ in
$\Hom_{G(F)}(\pi,\BC)$, the {\em spherical character}
$\Phi_{\pi,\ell}$ associated to $\ell$ is the distribution on $G(E)$
defined by
$$\Phi_{\pi,\ell}(f):=\sum_{v\in\ob(\pi)}\ell(\pi(f)v)\overline{\ell(v)},
\quad f\in C_c^\infty(G(E)),$$ where $\ob(\pi)$ is an orthonormal
basis of the representation space $V_{\pi}$ of $\pi$. In this note,
our main goal is to show that spherical characters satisfy an
orthogonality relation when $\pi$ is unitary supercuspidal.

Before stating our result, we introduce some notations. Recall that
an element $g\in G(E)$ is called $\theta$-regular if
$s(g):=g^{-1}\theta(g)$ is regular semisimple in $G(E)$ in the usual
sense; a $\theta$-regular element $g$ is called $\theta$-elliptic if
the identity component of the centralizer of $s(g)$ in $G$ is an
elliptic $F$-torus. We denote by $G(E)_\reg$ (resp. $G(E)_\el$) the
subset of $\theta$-regular (resp. $\theta$-elliptic) elements of
$G(E)$.

The following fundamental theorem is proved by J. Hakim (see
\cite[Theorem 1]{ha}).

\begin{thm}[Hakim]\label{thm. hakim}
The spherical character $\Phi_{\pi,\ell}$ is locally integrable on
$G(E)$ and locally constant on the $\theta$-regular locus
$G(E)_\reg$.
\end{thm}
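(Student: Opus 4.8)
The plan is to treat both assertions --- local integrability on $G(E)$ and local constancy on $G(E)_\reg$ --- as \emph{local} properties of the distribution $\Phi_{\pi,\ell}$, and to establish them by Harish--Chandra descent on the $p$-adic symmetric space attached to the Galois pair, using at the crucial step that $\pi$ is supercuspidal. I would first record the structural input. If $f\in C_c^\infty(G(E))$ is bi-invariant under a compact open subgroup $K$, then $\pi(f)$ factors through the finite-dimensional space $V_\pi^K$, so the sum defining $\Phi_{\pi,\ell}(f)$ is finite and, by the usual trace manipulation, independent of the chosen orthonormal basis; hence $\Phi_{\pi,\ell}$ is a well-defined distribution. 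Since $\ell$ is $G(F)$-invariant and Haar measure on $G(F)$ is bi-invariant, $\Phi_{\pi,\ell}$ is invariant under left and right translation by $G(F)$, and therefore descends --- along the symmetrization map $g\mapsto s(g)=g^{-1}\theta(g)$, which carries left $G(F)$-translation to the trivial action and right $G(F)$-translation to conjugation --- to a $G(F)$-conjugation-invariant distribution on the symmetric variety $S=\{s(g):g\in G(E)\}$. Finally, since the Bernstein centre $\fZ$ of $G(E)$ acts on $\pi$ through a character $\chi_\pi$, one gets $\Phi_{\pi,\ell}(z*f)=\chi_\pi(z)\Phi_{\pi,\ell}(f)$ for $z\in\fZ$, so $\Phi_{\pi,\ell}$ is a $\fZ$-eigendistribution. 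Both conclusions being local in $G(E)$, it suffices to analyze $\Phi_{\pi,\ell}$ near an arbitrary point, and --- by semisimple descent on $S$ --- near semisimple points.

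On the $\theta$-regular locus I would argue directly. As $\pi$ is supercuspidal and $G$-distinguished, $\ell$ is realized by a period integral $\ell(v)=\int_{G(F)}\langle\pi(h)v,v_0\rangle\,dh$ for a suitable $v_0\in V_\pi$ (taken modulo the centre, where it converges absolutely, matrix coefficients of a supercuspidal representation being compactly supported modulo the centre and $G(F)$ sitting there as a closed subgroup). Substituting this into the definition and unfolding the two integrals --- legitimate by absolute convergence and the finiteness noted above --- identifies $\Phi_{\pi,\ell}$ with the distribution given by a double period integral of a matrix coefficient:
$$\Phi_{\pi,\ell}(f)=\int_{G(E)}f(g)\,\Psi(g)\,dg,\qquad \Psi(g)=\int_{G(F)}\!\int_{G(F)}\varphi(h_1 g h_2^{-1})\,dh_1\,dh_2,\quad \varphi(g)=\langle\pi(g)v_0,v_0\rangle,$$
again read modulo the centre. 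Near a $\theta$-elliptic point $g_0$, the centralizer of $s(g_0)$ meets $G(F)$ in an anisotropic-modulo-centre torus, hence in a compact group, and the double coset $G(F)g_0G(F)$ is closed; together with the compact-modulo-centre support of $\varphi$, this forces the effective domain of integration in $\Psi$ to be compact modulo the centre on a neighbourhood of $g_0$, so there $\Psi$ converges to a locally constant function. Near a $\theta$-regular but non-$\theta$-elliptic $g_0$, the identity component of the centralizer of $s(g_0)$ is a non-elliptic $F$-torus, hence contained in a proper $\theta$-stable Levi subgroup of $\FG$; Harish--Chandra descent for symmetric spaces then expresses $\Phi_{\pi,\ell}$ near $g_0$ in terms of a Jacquet module of $\pi$, which vanishes because $\pi$ is supercuspidal. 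Thus $\Phi_{\pi,\ell}$ is represented on $G(E)_\reg$ by a locally constant function, in fact one vanishing off $G(E)_\el$. (For general admissible $\pi$ one would instead control the density on the Cartan subspaces of the regular locus using the $\fZ$-eigenequations, as in Harish--Chandra's regularity theorem.)

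For local integrability near the singular (non-$\theta$-regular) locus I would use semisimple descent on $S$: near a semisimple element whose centralizer is $\FG'$, a $\theta$-invariant neighbourhood is modelled, via a Cayley transform, on a neighbourhood of $0$ in the infinitesimal symmetric pair $(\Lie\FG',\theta)$, and the $\fZ$-eigenequations descend as well. One is reduced to showing that a conjugation-invariant eigendistribution on a neighbourhood of $0$ in a reductive symmetric Lie-algebra pair is locally integrable. I would prove this along the lines of Harish--Chandra's regularity theorem: via the Weyl-type integration formula for the symmetric pair, which expresses integration over a $\theta$-invariant neighbourhood in terms of a finite family of Cartan subspaces with Jacobian a power of a relative discriminant $D$, together with the boundedness and compact support of the relative orbital integrals of test functions and the resulting bound $|\Psi|\le C|D|^{-1/2}$ on the density along each Cartan subspace; since $|D|^{-1/2}$ is integrable against the discriminant-weighted measure on a compact set, local integrability follows, and an induction on $\dim\FG$ closes the descent.

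The genuinely hard part will be this last step: the local-integrability (equivalently, boundedness) estimate near the singular set, which is the symmetric-space analogue of Harish--Chandra's control of orbital integrals and of his regularity theorem for invariant eigendistributions. The structural reductions of the first paragraph are formal, the $\theta$-elliptic regular case is essentially a convergence bookkeeping with supercuspidal matrix coefficients, and the non-elliptic regular case rests on the (standard, though technical) theory of Harish--Chandra descent on $p$-adic symmetric spaces together with the vanishing of Jacquet modules; but quantifying the singularities of the relative orbital integrals precisely enough to integrate them against the test function is where the real analytic work lies.
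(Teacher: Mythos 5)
The paper itself does not prove this statement: it is cited from Hakim's 1994 paper \cite{ha} (``The following fundamental theorem is proved by J.~Hakim''), and the rest of the paper uses it as a black box. There is therefore no in-paper proof to compare against, and your write-up should be read as a reconstruction of Hakim's argument. Its overall architecture --- reduce to a Harish-Chandra-type regularity theorem for conjugation-invariant $\fZ$-eigendistributions on the symmetric variety via semisimple descent, Weyl integration with the discriminant weight, and bounds on relative orbital integrals --- is the right shape and is close to what Hakim actually does.

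That said, two gaps are more than technical fill-in. First, the theorem is stated for an arbitrary irreducible admissible unitary distinguished $\pi$, but your handling of the regular locus requires $\pi$ supercuspidal throughout: the period realization $\ell(v)=\int_{G(F)/A(F)}\langle\pi(h)v,v_0\rangle\,\d h$ comes from \cite[Theorem 1.5]{zh}, which is a result about discrete series; the compact-mod-centre support of the matrix coefficient, and the Jacquet-module vanishing used in the non-elliptic regular case, likewise use cuspidality. The parenthetical ``for general $\pi$ one would instead control the density using the $\fZ$-eigenequations'' defers what is really the main content of Hakim's theorem to an aside. Second, the identity $\Phi_{\pi,\ell}(f)=\int_{G(E)}f(g)\Psi(g)\,\d g$ with $\Psi(g)=\int\!\!\int\varphi(h_1gh_2^{-1})\,\d h_1\,\d h_2$ is obtained by pulling $\int_{G(E)}$ through the two period integrals; that interchange needs absolute convergence of the triple integral, which is essentially the local integrability being proved. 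The iterated integral with $\d g$ innermost always converges, but representing $\Phi_{\pi,\ell}$ by a pointwise density $\Psi$ is legitimate only where $\Psi$ is already under control --- for instance for $f$ supported off the singular set. So the ``structural reductions'' are not formal: the unfolding step already hides the analytic heart of the theorem, and the singular-set estimates you correctly identify as the hard part (the $|D|^{-1/2}$ bound on relative orbital integrals, the Weyl integration formula on a neighbourhood of a singular point, the induction on dimension) are left entirely unverified.
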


Fixing a Haar measure $\d g$ on $G(E)$, we denote by
$\phi_{\pi,\ell}$ the locally integrable function on $G(E)$
representing the distribution $\Phi_{\pi,\ell}$, that is,
$$\Phi_{\pi,\ell}(f)=\int_{G(E)}\phi_{\pi,\ell}(g)f(g)\ \d g,
\quad f\in C_c^\infty(G(E)).$$ We will also call $\phi_{\pi,\ell}$ a
{\em spherical character}. Note that $\phi_{\pi,\ell}$ is
bi-$G(F)$-invariant. Theorem \ref{thm. hakim} is analogous to the
classical result of Harish-Chandra on admissible invariant
distributions on connected reductive $p$-adic groups (see
\cite[Theorem 16.3]{hc1}).

When $\pi$ is unitary supercuspidal, we will show that the spherical
characters $\phi_{\pi,\ell}$ satisfy an orthogonality relation (see
Theorem \ref{thm. orthogonal}). Before stating this relation, we
need to review the Weyl integration formula in the setting of
symmetric pairs. We refer the reader to \cite[\S3]{rr} or
\cite[\S6]{ha2} for the notation below and more details on this
integration formula.

Let $\sT$ be a set of representatives for the equivalence classes of
Cartan subsets of $G(E)$ with respect to the involution $\theta$.
For $T\in\sT$, denote $T_\reg=T\cap G(E)_\reg$. For $T\in\sT$, the
map
$$\mu:G(F)\times T_\reg\times G(F)\lra G(E)_\reg,
\quad (h_1,t,h_2)\mapsto h_1th_2,$$ is submersive and
$$G(E)_\reg=\coprod_{T\in\sT}G(F)T_\reg G(F).$$
Let $A$ be the split component of the center of $G$. For each
$\theta$-regular element $g$, we choose a Haar measure on
$G_\gamma(F)$ where $\gamma=s(g)$ and $G_\gamma$ is the split
component of the centralizer of $\gamma$ in $G$. Fix Haar measures
on $A(F)$ and $G(F)$. For $\phi\in C_c^\infty(G(E)/A(F))$ and $g\in
G(E)_\reg$, the orbital integral $O(g,\phi)$ of $\phi$ at $g$ is
defined to be
$$O(g,\phi)=\int_{A(F)\bs G(F)}\int_{G_\gamma(F)\bs G(F)}\phi(h_1gh_2)
\ \d h_1\ \d h_2,$$ where $\gamma=s(g)$ and the measures inside the
integral are quotient measures. For $T\in\sT$, the group $G_{s(t)}$
is the same for each $t\in T_\reg$. Let $D_{G(E)}$ be the usual Weyl
discriminant function on $G(E)$. Then the Weyl integration formula
reads as follows: with suitably normalized measures, for each
$\phi\in C_c^\infty(G(E)/A(F))$, we have
$$\int_{G(E)/A(F)}\phi(g)\ \d g=\sum_{T\in\sT}\frac{1}{w_T}\int_{T}
\abs{D_{G(E)}(s(t))}_E\cdot O(t,\phi)\ \d t,$$ where $w_T$ are some
positive constants only depending on $T$ (cf. \cite[Theorem 3.4]{rr}
and \cite[Lemma 5]{ha2}). Let $\sT_\el$ be the subset of $\sT$
consisting of elliptic Cartan subsets, that is, for $T\in\sT$, $T$
belongs to $\sT_\el$ if and only if $T_\reg\subset G(E)_\el$.

\begin{thm}\label{thm. orthogonal}\begin{enumerate}
\item Suppose that $\pi$ is unitary supercuspidal and
$G$-distinguished. Let $\ell$ be a nonzero element of
$\Hom_{G(F)}(\pi,\BC)$. Then
$$\sum_{T\in\sT_\el}\frac{1}{w_T}\int_{T}\abs{D_{G(E)}(s(t))}_E\cdot\abs{\phi_{\pi,\ell}(t)}^2\
\d t$$ is nonzero.
\item Suppose that $\pi$ and $\pi'$ are two unitary supercuspidal
representations of $G(E)$ and $\pi\ncong\pi'$. Then for any
$\ell\in\Hom_{G(F)}(\pi,\BC)$ and $\ell'\in\Hom_{G(F)}(\pi',\BC)$,
we have
$$\sum_{T\in\sT_\el}\frac{1}{w_T}\int_{T}\abs{D_{G(E)}(s(t))}_E\cdot\phi_{\pi,\ell}(t)
\cdot\overline{\phi_{\pi',\ell'}(t)}\ \d t=0.$$
\end{enumerate}
\end{thm}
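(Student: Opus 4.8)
The plan is to evaluate $\Phi_{\pi,\ell}$ on matrix coefficients of supercuspidal representations and compare the spectral expansion, coming from Schur orthogonality, with the geometric expansion, coming from the Weyl integration formula; this parallels the classical proof of the elliptic orthogonality relations for characters of discrete series. The one genuinely new geometric input is a relative analogue of the vanishing of orbital integrals of cusp forms off the elliptic set: \emph{if $f\in C_c^\infty(G(E)/A(F))$ is a cusp form on $G(E)$ --- in particular, a matrix coefficient of a supercuspidal representation --- then $O(t,f)=0$ for every $t\in T_\reg$ with $T\in\sT\sm\sT_\el$.} I would prove this by parabolic descent of relative orbital integrals along $\theta$-split parabolic subgroups of $\FG$: if $t$ is $\theta$-regular but not $\theta$-elliptic, then the symmetric-space centralizer of $s(t)$ contains a nontrivial $F$-split $\theta$-split torus and hence lies in the Levi component of a proper $\theta$-split parabolic $P$, and the descent formula writes $O(t,f)$ in terms of a relative orbital integral over that Levi of the constant term of $f$ along $P$; but a matrix coefficient of a supercuspidal has vanishing constant term along every proper parabolic $F$-subgroup of $\FG$ (these correspond to the proper parabolic $E$-subgroups of $G$). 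This kind of descent is available for $p$-adic symmetric spaces (cf. \cite{rr}, \cite{ha2}), and making it rigorous is where I expect the bulk of the work to lie.

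For Part (1), combining Theorem \ref{thm. hakim} with the Weyl integration formula and using that $\phi_{\pi,\ell}$ is bi-$G(F)$-invariant gives, for every $f\in C_c^\infty(G(E)/A(F))$,
$$\Phi_{\pi,\ell}(f)=\sum_{T\in\sT}\frac{1}{w_T}\int_{T}\abs{D_{G(E)}(s(t))}_E\cdot\phi_{\pi,\ell}(t)\cdot O(t,f)\ \d t,$$
and when $f$ is a matrix coefficient of a supercuspidal the vanishing statement collapses the sum to $T\in\sT_\el$. Now fix a unit vector $v\in V_\pi$ with $\ell(v)\neq0$ and let $f_{v,v}$ be the associated matrix coefficient, normalized so that $\pi(f_{v,v})$ is $d_\pi^{-1}$ times the orthogonal projection onto $\BC v$ ($d_\pi$ the formal degree; with the usual conventions on central characters, $f_{v,v}\in C_c^\infty(G(E)/A(F))$). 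Unwinding the defining series gives $\Phi_{\pi,\ell}(f_{v,v})=d_\pi^{-1}\abs{\ell(v)}^2>0$, while the formula above expresses $\Phi_{\pi,\ell}(f_{v,v})$ as $\sum_{T\in\sT_\el}\frac1{w_T}\int_T\abs{D_{G(E)}(s(t))}_E\,\phi_{\pi,\ell}(t)\,O(t,f_{v,v})\ \d t$. If the quantity in Part (1) were zero, then, that integrand being non-negative and $\phi_{\pi,\ell}$ being locally constant on $G(E)_\reg$, $\phi_{\pi,\ell}$ would vanish identically on $G(E)_\el$, forcing the latter expression to be $0$ and contradicting $\Phi_{\pi,\ell}(f_{v,v})>0$.

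For Part (2) I would first identify $\overline{\phi_{\pi',\ell'}}$ on the $\theta$-elliptic locus with a relative orbital integral. Write $\ell'(x)=\int_{A(F)\bs G(F)}\pair{\pi'(h)x,w_0'}\ \d h$ for a suitable $w_0'\in V_{\pi'}$: the period integral converges because $\pi'$ is supercuspidal, and the map $w_0'\mapsto\ell'$ is surjective onto $\Hom_{G(F)}(\pi',\BC)$ for distinguished supercuspidal $\pi'$ (a point needing checking, e.g.\ via relative cuspidality). For $f\in C_c^\infty(G(E)/A(F))$ supported on $G(E)_\el$ one may interchange the orthonormal-basis sum defining $\Phi_{\pi',\ell'}(f)$ with the two period integrals; this is justified by absolute convergence of the resulting triple integral, which holds precisely on $G(E)_\el$ because there $G_{s(t)}=A$, so the double integral over $A(F)\bs G(F)$ twice is just the convergent orbital integral $O(t,\cdot)$. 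Matching the two expressions yields, for the matrix coefficient $f_{w_0',w_0'}$ of $\pi'$ normalized as in Part (1),
$$\overline{\phi_{\pi',\ell'}(t)}=O(t,f_{w_0',w_0'})\qquad\text{for all }t\in G(E)_\el.$$

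Finally, combining this identity with the general formula $\Phi_{\pi,\ell}(f)=\sum_{T}\frac1{w_T}\int_T\abs{D_{G(E)}(s(t))}_E\,\phi_{\pi,\ell}(t)\,O(t,f)\ \d t$ applied to the cusp form $f=f_{w_0',w_0'}$ (whence only $T\in\sT_\el$ survive) gives
$$\sum_{T\in\sT_\el}\frac{1}{w_T}\int_{T}\abs{D_{G(E)}(s(t))}_E\cdot\phi_{\pi,\ell}(t)\cdot\overline{\phi_{\pi',\ell'}(t)}\ \d t=\Phi_{\pi,\ell}(f_{w_0',w_0'}).$$
Since $f_{w_0',w_0'}$ is a matrix coefficient of $\pi'$ and $\pi\ncong\pi'$, Schur orthogonality forces $\pi(f_{w_0',w_0'})=0$, whence $\Phi_{\pi,\ell}(f_{w_0',w_0'})=0$. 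This establishes Part (2). The remaining points --- normalizing the Haar measures and the constants $w_T$, arranging trivial central characters on $A(F)$, and verifying that the Weyl integration formula applies to the product of a locally integrable function with an element of $C_c^\infty(G(E)/A(F))$ --- are routine and can be handled as in \cite{rr}, \cite{ha2}; the one substantial obstacle is the relative parabolic descent underlying the vanishing statement of the first paragraph.
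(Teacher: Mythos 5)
Your proposal follows essentially the same architecture as the paper's: prove that orbital integrals of supercuspidal matrix coefficients vanish off the $\theta$-elliptic locus, feed this into the Weyl integration formula applied to $\phi_{\pi,\ell}\cdot\phi_f$, identify $\phi_{\pi,\ell}$ on the elliptic locus with an orbital integral of a matrix coefficient via the period construction of the invariant functional, and then use Schur orthogonality to distinguish the two spectral outcomes. Part (2) matches the paper's argument closely, and your Part (1) is actually slightly more economical: by arguing by contradiction (vanishing of the $L^2$-elliptic norm would force $\phi_{\pi,\ell}\equiv 0$ on $G(E)_\el$, contradicting $\Phi_{\pi,\ell}(f_{v,v})>0$) you bypass the paper's need to invoke the explicit period expression $\ell=\sL_{u_0}$ from \cite[Theorem~1.5]{zh} and the integral formula from \cite[Corollary~1.11]{zh} in Part (1), using them only where they are really needed, namely Part (2).

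The one place where your sketch has a genuine hole is the proof of the key vanishing lemma (Lemma~\ref{lem. orbital integral of supercusp form} in the paper). You propose to deduce it from a general parabolic descent formula for relative orbital integrals along $\theta$-split parabolics; you honestly flag this as the bulk of the work, but there is a specific obstruction you do not mention. When $t\in G(E)_\reg$ is not $\theta$-elliptic, the element $\gamma=s(t)$ lies in a proper $F$-Levi $M\subset G$, but it is \emph{not} automatic that $t$ itself can be replaced by an element of $M(E)$ in the same twisted-orbit: one needs to know that $\gamma\in s(M(E))$ and not merely $\gamma\in s(G(E))\cap M(E)$. The paper isolates exactly this point as Lemma~\ref{lem. galois cohomology} and proves it by a Galois cohomology argument, showing that the map $H^1(\theta,\FM(F))\to H^1(\theta,\FG(F))$ is injective (using the parabolic $P=MU$ as an intermediary and the isomorphisms $H^1(\bullet,P)\cong H^1(\bullet,M)$). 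Once this reduction is in hand, the paper does not invoke a ready-made descent formula from \cite{rr} or \cite{ha2}; instead it uses the Iwasawa decomposition $G(F)=M(F)N(F)K$ to reduce the relative orbital integral to an inner integral over $N(F)\times N(F)$, and then proves by a direct change-of-variables argument --- showing that $(n_1,n_2)\mapsto g^{-1}n_1gn_2$ is an isomorphism $N\times N\to\R_{E/F}N$ with constant Jacobian, via injectivity from \cite[Lemma~22]{hc} and a dimension/orbit-closedness argument for surjectivity --- that this inner integral equals (a constant multiple of) the constant term $\int_{N(E)}\phi(gn)\,\d n$, which vanishes because $\phi$ is a supercusp form on $G(E)=\FG(F)$. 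So the descent you gesture at is true, but it is built by hand in this Galois-pair setting, and the cohomological reduction to $M(E)$ is a substantive step you would need to supply before any parabolic descent argument gets off the ground.

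Two smaller remarks. First, your claim that the map $w_0'\mapsto\ell'=\sL_{w_0'}$ is surjective for distinguished supercuspidals, which you flag as ``a point needing checking,'' is exactly \cite[Theorem~1.5]{zh}, so it is in the literature but is not trivial. Second, in Part (2) you should be careful that $f_{w_0',w_0'}$, normalized so that $\pi'(f_{w_0',w_0'})$ is the scaled projection, is the \emph{complex conjugate} of the matrix coefficient $g\mapsto\pair{\pi'(g)w_0',w_0'}$; this is consistent with the Schur orthogonality as you use it, but it is worth stating explicitly when matching it against $O(t,\cdot)$ to get $\overline{\phi_{\pi',\ell'}(t)}$ rather than $\phi_{\pi',\ell'}(t)$.
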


Theorem \ref{thm. orthogonal} is an analog of the classical
orthogonality relation for characters of discrete series (see
\cite{cl} or \cite{ka} for this classical result). The following
corollary, which has potential application in simple relative trace
formula, is a direct consequence of Theorem \ref{thm. orthogonal}.

\begin{cor}\label{cor. nonvanishing}
Suppose that $\pi$ is unitary supercuspidal and $G$-distinguished.
Let $\ell$ be a nonzero element of $\Hom_{G(F)}(\pi,\BC)$. Then the
spherical character $\Phi_{\pi,\ell}$ does not vanish on $G(E)_\el$.
\end{cor}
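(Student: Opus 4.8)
The plan is to read the Corollary off from the nonvanishing statement in Theorem \ref{thm. orthogonal}(1); the only genuine work is to translate ``$\Phi_{\pi,\ell}$ vanishes on $G(E)_\el$'' into the pointwise assertion ``$\phi_{\pi,\ell}$ is identically zero on $G(E)_\el$''. First I would observe that $G(E)_\el$ is open in $G(E)$: the $\theta$-regular locus $G(E)_\reg$ is open because $g \mapsto s(g) = g^{-1}\theta(g)$ is continuous and the regular semisimple locus of $G(E)$ is open, and on $G(E)_\reg$ the conjugacy type of the identity component of the centralizer of $s(g)$ is locally constant, so the condition that it be an elliptic $F$-torus cuts out a subset that is open (and closed) in $G(E)_\reg$. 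By Theorem \ref{thm. hakim}, on the open set $G(E)_\el \subset G(E)_\reg$ the distribution $\Phi_{\pi,\ell}$ is represented by the locally constant function $\phi_{\pi,\ell}$, and a locally constant function on an open subset of $G(E)$ defines the zero distribution there precisely when it vanishes pointwise. Hence ``$\Phi_{\pi,\ell}$ does not vanish on $G(E)_\el$'' is equivalent to ``$\phi_{\pi,\ell}(g) \neq 0$ for some $g \in G(E)_\el$''.

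Then I would argue by contradiction. Suppose $\phi_{\pi,\ell}$ vanishes identically on $G(E)_\el$ and fix $T \in \sT_\el$. By the defining property of $\sT_\el$ we have $T_\reg \subset G(E)_\el$, so $\phi_{\pi,\ell}(t) = 0$ for every $t \in T_\reg$; on the other hand, for $t \in T \sm T_\reg$ the element $s(t)$ is not regular semisimple, so the Weyl discriminant satisfies $D_{G(E)}(s(t)) = 0$ and hence $\abs{D_{G(E)}(s(t))}_E = 0$. Therefore the nonnegative function $t \mapsto \abs{D_{G(E)}(s(t))}_E \cdot \abs{\phi_{\pi,\ell}(t)}^2$ vanishes identically on $T$, so its integral over $T$ is $0$; summing over $T \in \sT_\el$ against the positive weights $1/w_T$ shows that the sum in Theorem \ref{thm. orthogonal}(1) equals $0$. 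This contradicts Theorem \ref{thm. orthogonal}(1), so $\phi_{\pi,\ell}$ is not identically zero on $G(E)_\el$; by the previous paragraph, $\Phi_{\pi,\ell}$ does not vanish on $G(E)_\el$.

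The deduction is short and presents no real obstacle once Theorem \ref{thm. orthogonal}(1) is available; the only point that deserves care is the reduction in the first paragraph, which rests on Hakim's local constancy theorem (Theorem \ref{thm. hakim}) in order to move freely between the distribution $\Phi_{\pi,\ell}$ and its representing function on the $\theta$-regular locus. I would also remark, since the empty sum is $0$, that Theorem \ref{thm. orthogonal}(1) already forces $\sT_\el \neq \emptyset$ and $G(E)_\el \neq \emptyset$ whenever $\pi$ is unitary supercuspidal and $G$-distinguished, so the Corollary is not vacuous.
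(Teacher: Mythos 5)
Your argument is correct and is exactly the deduction the paper intends: the Corollary is stated there as ``a direct consequence'' of Theorem \ref{thm. orthogonal}(1), with the contrapositive via the Weyl-integration sum over $\sT_\el$ left implicit. Your extra care in using Theorem \ref{thm. hakim} to pass between the distribution $\Phi_{\pi,\ell}$ and the locally constant representing function $\phi_{\pi,\ell}$ on the open set $G(E)_\el$ is a correct and welcome filling-in of that implicit step.
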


\paragraph{Acknowledgements.}
This work was partially supported by NSFC 11501033 and the
Fundamental Research Funds for the Central Universities. The author
thanks Wen-Wei Li for helpful discussions, especially for his help
on some arguments in the proof of Lemma \ref{lem. orbital integral
of supercusp form}.

\section{Proof of Theorem \ref{thm. orthogonal}}

\begin{lem}\label{lem. galois cohomology}
Suppose that $\gamma=s(g)$ with $g\in G(E)$ lies in an $F$-Levi
subgroup $M$ of $G$. Then there exists $m\in M(E)$ such that
$\gamma=s(m)$.
\end{lem}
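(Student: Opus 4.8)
\emph{Setup and strategy.} Write $\theta(x)=\bar x$ for the action on $G(E)$ of the nontrivial element of $\Gal(E/F)$, so that $s(g)=g^{-1}\bar g$; note that $\gamma:=s(g)$ automatically satisfies $\gamma\bar\gamma=1$ and that $\bar g=g\gamma$. Since $M$ is an $F$-Levi subgroup of $G$, the plan is to fix a parabolic $F$-subgroup $P$ of $G$ having $M$ as a Levi factor, with unipotent radical $N$, and let $\pi\colon P\to M$ be the canonical projection with kernel $N$; this is a morphism of $F$-groups whose restriction to $M$ is the identity. With these choices, it suffices to produce $p\in P(E)$ with $s(p)=\gamma$: then $m:=\pi(p)\in M(E)$ will do the job, because $\pi$ is defined over $F$, hence commutes with $\theta$, so $s(m)=\pi(p)^{-1}\overline{\pi(p)}=\pi(p^{-1}\bar p)=\pi(\gamma)=\gamma$, the last equality holding because $\gamma\in M(E)$, on which $\pi$ is the identity.

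\emph{Producing $p$.} I would set $Q:=gPg^{-1}$, a parabolic subgroup of $G$ over $E$, and first check that $Q$ is already defined over $F$. Indeed, using $\bar g=g\gamma$ together with $\gamma\in M(E)\subseteq P(E)$ (so that $\gamma P\gamma^{-1}=P$), one computes $\bar Q=\bar g\,P\,\bar g^{-1}=g\gamma P\gamma^{-1}g^{-1}=gPg^{-1}=Q$; thus $Q$ is stable under $\Gal(E/F)$ and so is a parabolic $F$-subgroup of $G$. Now $P$ and $Q$ are parabolic $F$-subgroups that are conjugate under $g\in G(E)$, hence geometrically conjugate; by the Borel--Tits conjugacy theorem for parabolic subgroups of a connected reductive group over a field, geometrically conjugate parabolic $F$-subgroups are conjugate by an element of $G(F)$, so there is $h\in G(F)$ with $hPh^{-1}=Q=gPg^{-1}$. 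Then $(h^{-1}g)P(h^{-1}g)^{-1}=P$, and since a parabolic subgroup equals its own normalizer, $p:=h^{-1}g\in P(E)$. Finally, as $h\in G(F)$ we have $\bar p=h^{-1}\bar g$, whence $s(p)=p^{-1}\bar p=g^{-1}h\cdot h^{-1}\bar g=g^{-1}\bar g=\gamma$, as desired.

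\emph{Remarks on the structure of the argument and the main point.} In the cohomological language suggested by the name of the lemma, the two preceding steps amount to saying that the (inflation-compatible) map $H^1(\Gal(E/F),P(E))\to H^1(\Gal(E/F),G(E))$ has trivial kernel --- equivalently, that $H^1(F,P)\to H^1(F,G)$ is injective for $P$ parabolic --- so that the cocycle $\gamma$, which is trivial in $H^1(\Gal(E/F),G(E))$ because $\gamma=s(g)$, is already trivial in $H^1(\Gal(E/F),P(E))$; pushing this triviality forward along the retraction $\pi\colon P\to M$ then yields triviality in $H^1(\Gal(E/F),M(E))$, i.e. the existence of $m$. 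The only non-formal ingredient is the Borel--Tits theorem (equivalently, the Borel--Serre injectivity statement); I do not anticipate any real difficulty, since the entire content is the one-line observation that $gPg^{-1}$ descends to $F$ because $\bar g$ and $g$ differ by the element $\gamma\in P(E)$.
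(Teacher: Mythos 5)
Your proof is correct, and it takes a genuinely more direct route than the paper. The paper works entirely on the cohomological side: it invokes the exact sequences of Carmeli relating $s(\FG(F))$, $\tilde{\FX}(F)$ and the nonabelian set $H^1(\Gal(E/F),G(E))$, reduces the lemma to the injectivity of $\iota\colon H^1(\theta,\FM(F))\to H^1(\theta,\FG(F))$, and establishes that injectivity by a commutative-diagram chase passing through $H^1(\bullet,P)\cong H^1(\bullet,M)$ and $H^1(\bullet,P)\hookrightarrow H^1(\bullet,G)$ over both $F$ and $E$. You instead construct the desired element explicitly: from $\bar g = g\gamma$ and $\gamma\in P(E)=N_G(P)(E)$ you deduce $\bar Q=Q$ for $Q=gPg^{-1}$, so $Q$ is an $F$-parabolic; Borel--Tits then gives $h\in G(F)$ with $hPh^{-1}=Q$, and $p=h^{-1}g\in P(E)$ satisfies $s(p)=\gamma$ since $\bar h=h$; finally, projecting along the $F$-rational retraction $\pi\colon P\to M$ (which is the identity on $M\ni\gamma$) yields $m=\pi(p)\in M(E)$ with $s(m)=\gamma$. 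The one non-formal ingredient in both arguments is the same --- the triviality of the kernel of $H^1(F,P)\to H^1(F,G)$, which is equivalent to the Borel--Tits statement that $\bar F$-conjugate parabolic $F$-subgroups are $G(F)$-conjugate --- but your version makes the witness $m$ explicit and bypasses both Carmeli's formalism and the diagram chase, which is arguably cleaner and more transparent. One small point to flag if this were to replace the paper's proof: you should state which version of Borel--Tits you are citing (e.g.\ Borel, \emph{Linear Algebraic Groups}, Thm.\ 20.9, or Borel--Tits 1965, Thm.\ 4.13), since that theorem carries the entire weight of the argument.
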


\begin{proof}
First we recall some basic facts about symmetric spaces. Denote
$\FG=\R_{E/F}G$ and $\FM=\R_{E/F}M$. Let $\FX=\FG/G$ and
$\FX_M=\FM/M$ be the quotient varieties. As $F$-varieties, $\FX$ and
$\FX_M$ are isomorphic to the identity components of the varieties
defined by the equations
$$\tilde{\FX}=\{x\in\FG:\ x\theta(x)=1\}\quad\textrm{and}\quad
\tilde{\FX}_M=\{x\in\FM:\ x\theta(x)=1\}$$ respectively. The exact
sequences $$1\ra G\ra\FG\ra\FX\ra1\quad \textrm{and}\quad 1\ra
M\ra\FM\ra\FX_M\ra1$$ induce the following exact cohomology
sequences:
$$1\ra s(\FG(F))\ra\FX(F)\ra H^1(F,G)$$ and
$$1\ra s(\FM(F))\ra\FX_M(F)\ra H^1(F,M),$$ where we use the standard notation
$H^1(F,\bullet)$ to denote the Galois cohomology of algebraic groups
\cite[Chapter III. \S2]{ser}. However, the above exact sequences
have a little concern with our assertion. What we need are the
following exact sequences \cite[Lemma 4.1.1]{car}:
$$1\ra s(\FG(F))\ra\tilde{\FX}(F)\ra H^1(\theta,\FG(F))\ra 1$$
and
$$1\ra s(\FM(F))\ra\tilde{\FX}_M(F)\ra H^1(\theta,\FM(F))\ra 1,$$
where
$$H^1(\theta,\FG(F)):=H^1(\Gal(E/F),G(E))$$ and
$$H^1(\theta,\FM(F)):=H^1(\Gal(E/F),M(E)).$$

Note that $\gamma\in\tilde{\FX}_M(F)$ and Lemma \ref{lem. galois
cohomology} asserts that $\gamma\in s(\FM(F))$. According to
\cite[Corollary 4.1.2]{car}, it suffices to show that the image
$[\gamma]_M$ of $\gamma$ in $H^1(\theta,\FM(F))$ is trivial. On the
other hand, we know that the image $[\gamma]_G$ of $\gamma$ in
$H^1(\theta,\FG(F))$ is trivial, and $[\gamma]_G$ is also the image
of $[\gamma]_M$ under the natural map
$$\iota:H^1(\theta,\FM(F))\ra H^1(\theta,\FG(F)).$$ We claim that
$\iota$ is injective, which implies that $[\gamma]_M$ is trivial.
Consider the exact sequences \cite[Chapter I. \S5.8(a)]{ser}:
$$1\ra H^1(\theta,\FG(F))\ra H^1(F,G)\ra H^1(E,G)^{\Gal(E/F)}$$
and
$$1\ra H^1(\theta,\FM(F))\ra H^1(F,M)\ra H^1(E,M)^{\Gal(E/F)}.$$
Let $P$ an $F$-parabolic subgroup of $G$ such that $P=M\ltimes U$
where $U$ is the unipotent radical of $P$. We have natural
isomorphisms (see \cite[Lemma 16.2]{gil})
$$H^1(F,P)\stackrel{\simeq}{\lra} H^1(F,M),\quad\textrm{and}\quad
H^1(E,P)\stackrel{\simeq}{\lra} H^1(E,M),$$ and natural injections
\cite[Chapter III. \S2.1]{ser}
$$H^1(F,P)\incl H^1(F,G)\quad\textrm{and}\quad
H^1(E,P)\incl H^1(E,G).$$ In summary we have the following
commutative diagram of exact sequences:
$$\begin{matrix}
&1&\ra&H^1(\theta,\FG(F))&\ra&H^1(F,G)&\ra&H^1(E,G)\\
&&&\uparrow&&\uparrow&&\uparrow\\
&1&\ra&H^1(\theta,\FP(F))&\ra&H^1(F,P)&\ra&H^1(E,P)\\
&&&\downarrow&&\downarrow&&\downarrow\\
&1&\ra&H^1(\theta,\FM(F))&\ra&H^1(F,M)&\ra&H^1(E,M),\\
\end{matrix}$$
which implies that $\iota$ is injective.

\end{proof}

\begin{lem}\label{lem. orbital integral of supercusp form}
Suppose that $\phi$ is a matrix coefficient of a unitary
supercuspidal $G$-distinguished representation. Then, for any $g\in
G(E)_\reg$, the orbital integral $O(g,\phi)$ vanishes unless $g$ is
$\theta$-elliptic.
\end{lem}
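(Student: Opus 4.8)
The plan is to adapt the classical argument that orbital integrals of supercuspidal matrix coefficients are supported on elliptic elements, working now in the symmetric-space setting. Write $\phi$ as a matrix coefficient $\phi(x)=\langle\pi(x)v,w\rangle$ of a unitary supercuspidal $G$-distinguished representation $\pi$, and fix $g\in G(E)_\reg$ with $\gamma=s(g)$ \emph{not} $\theta$-elliptic. By definition the identity component $H$ of the centralizer of $\gamma$ in $G$ is then not an elliptic $F$-torus, so $H$ is contained in a proper $F$-parabolic $P=M\ltimes U$ of $G$ with $A_M\subsetneq A_G$ a strictly larger split central torus; in particular $\gamma$ lies in the $F$-Levi subgroup $M$. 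By Lemma \ref{lem. galois cohomology} we may choose $m\in M(E)$ with $s(m)=\gamma$, and after translating $g$ by elements of $G(F)$ on the left and right (which only changes $O(g,\phi)$ by the substitution built into its definition) we may arrange $g=m\in M(E)$.

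The heart of the matter is then to rewrite the double integral defining $O(g,\phi)$ so that an inner integral over a unipotent group computes a period of a matrix coefficient against $U(E)$, and to exploit the non-compactness of $A_M(F)/A_G(F)$ together with supercuspidality. First I would use an Iwasawa-type decomposition $G(F)=P(F)K=M(F)U(F)K$ for a suitable maximal compact $K\subset G(F)$ to unfold the integral over $G_\gamma(F)\backslash G(F)$, or alternatively over $A(F)\backslash G(F)$, replacing the quotient by an integral over $M(F)U(F)K$ modulo the centralizer; since $\gamma\in M(E)$ and $H\subseteq M$, the centralizer computations all take place inside $M$. Pushing the $U(F)$-variable through $\phi(h_1 m h_2)$ and using that $m$ normalizes nothing extra, the inner integral over $U(F)$ (or over a unipotent subgroup attached to $P$) becomes an integral of the form $\int_{U(F)}\langle\pi(u)\,\pi(\text{stuff})v,\,w'\rangle\,\d u$, which converges absolutely because matrix coefficients of supercuspidal representations are compactly supported modulo the center — this is the standard input that makes such unipotent periods finite.

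Having reduced to a finite inner integral, the remaining outer integral runs over $A_M(F)/A_G(F)$ times compact pieces, and the key estimate is that the function $a\mapsto \phi(h_1 a m h_2)$, for $a$ ranging over $A_M(F)/A_G(F)$, is compactly supported: again because $\phi$ is a supercuspidal matrix coefficient, hence supported in a set that is compact modulo $A_G(F)=Z(G)(F)$-translation, while $A_M(F)/A_G(F)$ is a genuinely non-compact vector group, the translates $a\cdot(\text{supp}\,\phi)$ eventually leave the support. This is essentially the same mechanism Harish-Chandra uses to show supercuspidal characters vanish on non-elliptic regular elements, and it forces the outer integral — and hence $O(g,\phi)$ — to vanish identically on the non-elliptic regular set. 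The main obstacle I anticipate is the bookkeeping of measures and of the unipotent integration in the twisted setting: one must check that the formal manipulation of $\int_{G_\gamma(F)\backslash G(F)}$ via the Iwasawa decomposition is legitimate (Fubini requires absolute convergence, which in turn uses the supercuspidal support bound), and that passing to $m\in M(E)$ via Lemma \ref{lem. galois cohomology} is compatible with the decomposition $G(E)_\reg=\coprod_{T}G(F)T_\reg G(F)$ and with the choice of Haar measure on $G_\gamma(F)$. Once the convergence is secured the vanishing is immediate from the non-compactness argument.
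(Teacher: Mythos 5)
Your reduction steps are correct and match the paper: you invoke Lemma~\ref{lem. galois cohomology} to move $g$ into $M(E)$, and you set up the Iwasawa decomposition $G(F)=M(F)N(F)K$ to unfold both copies of $G(F)$ appearing in $O(g,\phi)$. But after that the argument goes off the rails. You assert that the inner unipotent integral is merely \emph{finite} (``converges absolutely because matrix coefficients of supercuspidal representations are compactly supported modulo the center''), and then try to get the vanishing from non-compactness of $A_M(F)/A_G(F)$. Neither half of this is right. Once you have integrated over $N(F)\times N(F)$ and $K\times K$, the integrand is a new function of $(m_1,m_2)\in M(F)/A(F)\times G_\gamma(F)\backslash M(F)$; it is no longer $\phi$ itself, so you cannot directly invoke compact support of $\phi$ modulo $A_G(F)$, and the variable $A_M(F)/A_G(F)$ does not appear as a separate factor in the integral (the quotients are $A(F)\backslash M(F)$ and $G_\gamma(F)\backslash M(F)$, with $G_\gamma$ a torus sitting in $M$). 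The non-compactness argument is a red herring here: it shows that certain integrals \emph{could} diverge if the integrand didn't decay, not that the orbital integral vanishes.

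The point you are missing is the genuine content of the lemma: the inner integral over the two unipotent groups $N(F)\times N(F)$ is \emph{identically zero}, not merely finite. The paper proves this by observing that the map
$$\eta_g\colon N\times N\longrightarrow \R_{E/F}N,\qquad (n_1,n_2)\mapsto g^{-1}n_1gn_2,$$
is an isomorphism of $F$-varieties — injectivity from Harish-Chandra's \cite[Lemma 22]{hc} applied to the regular element $\gamma=s(g)$, and surjectivity from a dimension count ($\dim_F\Lie(\R_{E/F}N)=2\dim_F\Lie(N)$, which is exactly why the Galois-pair setting is needed) combined with the fact that an orbit of a unipotent group is closed. The Jacobian is a constant, so
$$\int_{N(F)\times N(F)}\phi(n_1^{-1}gn_2)\,\d n_1\,\d n_2$$
is a nonzero constant times $\int_{N(E)}\phi(gn)\,\d n$, which vanishes because a supercuspidal matrix coefficient is a supercusp form. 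That identification of the two-variable unipotent integral over $N(F)\times N(F)$ with the one-variable integral over $N(E)$ is the crux of the proof, and your write-up does not contain it; without it you have no mechanism forcing $O(g,\phi)$ to vanish.
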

\begin{proof}
Since $\phi$ is a matrix coefficient of a unitary supercuspidal
$G$-distinguished representation, it belongs to
$C_c^\infty(G(E)/A(F))$ and is a supercusp form. In particular, for
any unipotent radical $N$ of a proper parabolic subgroup $P$ of $G$,
we have
$$\int_{N(E)}\phi(gn)\ \d n=0$$ for any $g\in G(E)$.
Write $\gamma=s(g)$. Suppose that $g$ is not $\theta$-elliptic,
which means that $\gamma$ is not elliptic by definition. Therefore
there exists a Levi subgroup $M$ of a proper parabolic subgroup $P$
of $G$ such that $G_\gamma\subset M$. According to Lemma \ref{lem.
galois cohomology} there exists $m\in M(E)$ such that $\gamma=s(m)$.
Since
$$O(g,\phi)=O(m,\phi),$$  we assume that $g$ is in
$M(E)$ from now on. Let $N$ be the unipotent radical of $P$, and $K$
a maximal open compact subgroup of $G(F)$ such that
$G(F)=M(F)N(F)K$. Fix Haar measures $\d m,\ \d n$ and $\d k$ on
$M(F)/A(F),\ N(F)$ and $K/K\cap A(F)$ respectively so that $\d h=\d
k\ \d n\ \d m$ on $G(F)/A(F)$. Denote $\bar{K}=K/K\cap A(F)$. Then
the orbital integral $O(g,\phi)$ can be written as follows:
$$\begin{aligned}
O(g,\phi)&=\int_{A(F)\bs G(F)}\int_{G_\gamma(F)\bs
G(F)}\phi(h^{-1}_1gh_2)\ \d
h_2\ \d h_1\\
&=\int_{\left(A(F)\bs M(F)\right)\times N(F)\times
\bar{K}}\int_{\left(G_\gamma(F)\bs M(F)\right)\times N(F)\times
\bar{K}}\phi(k_1^{-1}n_1^{-1}m_1^{-1}gm_2n_2k_2)\\
&\ \ \cdot\ \d k_1\ \d k_2\ \d n_1\ \d n_2\ \d m_1\ \d m_2\\
&=\int_{\left(A(F)\bs M(F)\right)\times
N(F)}\int_{\left(G_\gamma(F)\bs M(F)\right)\times
N(F)}\phi'(n_1^{-1}m_1^{-1}gm_2n_2)\\
&\ \ \cdot\ \d n_1\ \d n_2\ \d m_1\ \d m_2,
\end{aligned}$$
where $$\phi'(x):=\int_{\bar{K}\times \bar{K}}\phi(k_1xk_2)\ \d k_1\
\d k_2,\quad x\in G(E)/A(F).$$ Note that $\phi'$ is still a
supercusp form on $G(E)$. From now on, for convenience, we write
$\phi$ instead of $\phi'$ and $g$ instead of $m_1^{-1}gm_2$. Let
$\gamma=s(g)$ for this ``new'' $g$. We claim that:
\begin{equation}\label{equ. vanishing of the inner integral}
\int_{N(F)\times N(F)}\phi(n_1^{-1}gn_2)\ \d n_1\ \d n_2=0.
\end{equation}
It is clear that this claim implies the lemma directly.

Now we begin to prove claim (\ref{equ. vanishing of the inner
integral}). Note that $$\int_{N(F)\times N(F)}\phi(n_1^{-1}gn_2)\ \d
n_1\ \d n_2=\int_{N(F)\times N(F)}\phi(g\cdot g^{-1}n_1gn_2)\ \d
n_1\ \d n_2.$$ Denote $\FN=\R_{E/F}N$. Consider the morphism of the
algebraic varieties:
$$\eta_g:N\times N\lra \FN,\quad (n_1,n_2)\mapsto
g^{-1}n_1gn_2.$$ We will show that $\eta_g$ is an isomorphism. If
$g^{-1}n_1gn_2=g^{-1}n_1'gn_2'$, we have the relation
$$n_2^{-1}\gamma n_2=s(n_1gn_2)=s(n'_1gn_2')=n_2'^{-1}\gamma n'_2.$$
By \cite[Lemma 22]{hc}, this relation implies $n_2=n'_2$, and thus
$n_1=n_1'$. Hence $\eta_g$ is injective. To show $\eta_g$ is
surjective, consider the Lie algebras $\fn'=\Lie(N')$,
$\fn''=\Lie(N)$ and $\fn=\Lie(\FN)$, where $N'$ is the unipotent
subgroup $g^{-1}Ng$. Since
$$2\dim_F\fn'=2\dim_F\fn''=\dim_F\fn$$ and $\fn'\cap\fn''=\{0\}$ by
the injectivity of $\eta_g$, we have $\fn=\fn'\oplus\fn''$.
Therefore $\eta_g$ is submersive and thus $N'\cdot N$ is open in
$\FN$. On the other hand, since $N'$ and $N$ are unipotent groups,
the orbit $N'\cdot N$ of $1$ under the left and right translations
of $N'$ and $N$ is closed in $\FN$. Hence $\FN=N'\cdot N$, that is,
$\eta_g$ is surjective. It turns out that
$$\int_{N(E)}\phi(gn)\ \d n
=\int_{N(F)\times N(F)}j_g(n_1,n_2)\cdot\phi(g\cdot g^{-1}n_1gn_2)\
\d n_1\ \d n_2,$$ where $j_g(n_1,n_2)$ is the Jacobian of $\eta_g$
at $(n_1,n_2)$. Note that
$$j_g(n_1,n_2)=\abs{\ad(g)|_{\fn(F)}}_E,$$ which is independent of
$(n_1,n_2)$. At last, the claim (\ref{equ. vanishing of the inner
integral}) follows from the condition that $\phi$ is a supercusp
form.

\end{proof}

\begin{proof}[Proof of Theorem \ref{thm. orthogonal}]
By \cite[Theorem 1.5]{zh}, there exists a vector $u_0$ in the space
$V_\pi$ such that $\ell=\sL_{u_0}$, where the $G(F)$-invariant
linear form $\sL_{u_0}$ is defined by
$$\sL_{u_0}(v):=\int_{G(F)/A(F)}\pair{\pi(h)v,u_0}\ \d h,\quad v\in
V_\pi.$$ Set $$\phi(g)=\pair{\pi(g)u_0,u_0},$$ which is a matrix
coefficient of $\pi$. Then, according to \cite[Corollary 1.11]{zh},
the spherical character $\Phi_{\pi,\ell}$ has the following
expression
\begin{equation}\label{equ. spherical character expression}
\Phi_{\pi,\ell}(f)=\int_{G(F)/A(F)}\int_{G(F)/A(F)}\int_{G(E)}\phi(h_1gh_2)f(g)
\ \d g\ \d h_1\ \d h_2.\end{equation} In particular, when $f\in
C_c^\infty(G(E)_\el)$, we get
$$\Phi_{\pi,\ell}(f)=\int_{G(E)}O(g,\phi)f(g)\ \d g.$$ On the other
hand, by Theorem \ref{thm. hakim}, we have
$$\Phi_{\pi,\ell}(f)=\int_{G(E)}\phi_{\pi,\ell}(g)f(g)\ \d g.$$
Therefore, for $g\in G(E)_\el$, we obtain
\begin{equation}\label{equ. character and orbital integral}
\phi_{\pi,\ell}(g)=O(g,\phi).\end{equation}

For the first assertion, choose $f_1\in C_c^\infty(G(E))$ so that
$\phi_{f_1}=\bar{\phi}$, where $$\phi_{f_1}(g):=\int_{A(F)}f_1(ag)\
\d a.$$ Then
$$\begin{aligned}\Phi_{\pi,\ell}(f_1)&=\int_{G(E)/A(F)}\phi_{\pi,\ell}(g)\cdot\phi_{f_1}(g)
\ \d g\\
&=\sum_{T\in\sT}\frac{1}{w_T}\int_{T}\abs{D_{G(E)}(s(t))}_E\cdot
\phi_{\pi,\ell}(t)\cdot O(t,\bar{\phi})\ \d t.
\end{aligned}$$
Combining Lemma \ref{lem. orbital integral of supercusp form} and
(\ref{equ. character and orbital integral}), we get
\begin{equation}\label{equ. spherical character elliptic locus}
\Phi_{\pi,\ell}(f_1)=\sum_{T\in\sT_\el}\frac{1}{w_T}\int_{T}\abs{D_{G(E)}(s(t))}_E\cdot
\abs{\phi_{\pi,\ell}(t)}^2\ \d t.\end{equation} Let
$$v_0=\frac{1}{\sqrt{\pair{u_0,u_0}}}u_0,$$ and choose
$\{v_i\}_{i\in\BN}$ such that $\{v_i\}_{i\geq0}$ is an orthonormal
basis of $V_\pi$. Then $$\pi(\bar{\phi})v_0=\lambda v_0\ \textrm{for
some nonzero }\lambda,\quad \textrm{and }\pi(\bar{\phi})v_i=0\
\textrm{for }i\geq1.$$ Therefore
$\Phi_{\pi,\ell}(f_1)=\lambda\abs{\ell(v_0)}^2$. From the proof of
\cite[Theorem 1.5]{zh}, we see that
$$\overline{\ell(u_0)}=c\int_{A(E)G(F)\bs G(E)}\abs{\ell\left(\pi(g)u_0\right)}^2\ \d g
=c'\pair{u_0,u_0},$$ where $c$ and $c'$ are some nonzero numbers.
Hence $\Phi_{\pi,\ell}(f_1)$ is nonzero. We complete the proof of
the first assertion.

For the second assertion, choose $f_2\in C_c^\infty(G(E))$ in
(\ref{equ. spherical character expression}) so that
$\phi_{f_2}=\bar{\phi'}$, where $\phi'$ is a matrix coefficient of
$\pi'$ such that the distribution $\Phi_{\pi',\ell'}$ can be
expressed as
$$\Phi_{\pi',\ell'}(f)=\int_{G(F)/A(F)}\int_{G(F)/A(F)}\int_{G(E)}\phi'(h_1gh_2)f(g)
\ \d g\ \d h_1\ \d h_2$$ for any $f\in C_c^\infty(G(E))$. By the
same reason for the equality (\ref{equ. spherical character elliptic
locus}), we have
$$\Phi_{\pi,\ell}(f_2)=\sum_{T\in\sT_\el}\frac{1}{w_T}\int_{T}\abs{D_{G(E)}(s(t))}_E\cdot
\phi_{\pi,\ell}(t)\cdot \overline{\phi_{\pi',\ell'}(t)}\ \d t.$$ On
the other hand
$$\Phi_{\pi,\ell}(f_2)
=\int_{G(F)/A(F)}\int_{G(F)/A(F)}\int_{G(E)/A(F)}\phi(h_1gh_2)\bar{\phi'}(g)
\ \d g\ \d h_1\ \d h_2=0,$$ since the inner integral over
$G(E)/A(F)$ vanishes by the Schur orthogonality relation. We
complete the proof of the second assertion.
\end{proof}

\s{\small Chong Zhang\\
School of Mathematical Sciences, Beijing Normal University,\\
Beijing 100875, P. R. China.\\
E-mail address: \texttt{zhangchong@bnu.edu.cn}}

\end{document}